\documentclass[reqno]{amsproc}
\usepackage[utf8]{inputenc}
\usepackage{enumitem}
\usepackage{amsfonts}
\usepackage{orcidlink}
\usepackage{graphicx}
\usepackage{amscd}
\usepackage{amsmath}
\usepackage{amssymb}
\usepackage{latexsym}
\usepackage{hyperref}
\usepackage[all]{xy}
\usepackage{color}
\usepackage{mathrsfs}
\theoremstyle{plain}

\newtheorem{lemma}{\bf Lemma}
\newtheorem*{lemma*}{\bf Lemma}

\newtheorem{remark}{\bf Remark}

\newtheorem{theorem}{\bf Theorem}
\newtheorem{example}{\bf Example}
\numberwithin{equation}{section}
\usepackage{xcolor}
\DeclareMathOperator{\ric}{Ric}
\allowdisplaybreaks[4]
\usepackage{lineno}

\begin{document}

\title[On a special class of gradient Ricci solitons]{On a special class of gradient Ricci solitons}
\author[José N.V. Gomes]{José N.V. Gomes\orcidlink{0000-0001-5678-4789}}
\address{Departamento de Matemática, Universidade Federal de São Carlos, Rod. Washington Luís, Km 235, 13565-905, São Carlos, São Paulo, Brazil}
\email{jnvgomes@ufscar.br}
\urladdr{https://www.ufscar.br}

\author[Marcus A.M. Marrocos]{Marcus A.M. Marrocos\orcidlink{0000-0001-6336-0693}} 
\address{Departamento de Matemática, Universidade Federal do Amazonas, Av. General Rodrigo Octavio Jordão Ramos, 1200, 69080-900, Manaus, Amazonas, Brazil}
\email{marcusmarrocos@ufam.edu.br}
\urladdr{https://ufam.edu.br}

\keywords{Ricci solitons; Warped metrics; Fiber bundles}
\subjclass[2020]{53C25, 53C15}

\begin{abstract}
We develop a method for constructing complete gradient Ricci solitons realized as fiber bundles endowed with warped metrics, and we establish necessary and sufficient conditions for their existence. As an application, we present new examples of complete gradient steady and shrinking Ricci solitons obtained via quotients by isometric group actions.
\end{abstract}
\maketitle

\section{Introduction}\label{intro}
A \emph{gradient Ricci soliton} is a triple $(M,g,\psi)$, where $(M,g)$ is a Riemannian manifold and $\psi$ is a smooth function on $M$ such that the Ricci tensor ${\rm Ric}_g$ of $g$ satisfies
\begin{equation}\label{eq:gradient-ricci-soliton}
{\rm Ric}_g+\nabla^2\psi = \lambda g
\end{equation}
for some constant $\lambda\in\mathbb{R}$. In this setting, $\psi$ is called the \emph{potential function}, and a gradient Ricci soliton is called \emph{shrinking}, \emph{steady}, or \emph{expanding} according as $\lambda>0$, $\lambda=0$, or $\lambda<0$. As is well known, up to diffeomorphisms and scaling, gradient Ricci solitons are special solutions to the Ricci flow that serve as geometric models describing the structure of its singularities. We refer the reader to the survey articles by Cao~\cite{Cao2010}, Cao, Chen, and Zhu~\cite{CaoChenZhu}, and Cao and Tran~\cite{CaoTran} for further details. 

Within this framework, the search for new examples remains a challenging problem. In the present work, we highlight some constructions that are particularly relevant to our interests. To the best of our knowledge, the most recent steady example was obtained by Lai~\cite{YiLai}, who constructed a family of three-dimensional complete gradient steady Ricci solitons known as flying wings, thereby confirming a conjecture of Richard S. Hamilton. In the Kähler setting, Bamler et al.~\cite{BCCD} constructed a new example of a complete gradient shrinking Kähler Ricci soliton with bounded scalar curvature and subsequently completed the classification of such solitons in complex dimension two. Very recently, a generalization of the flying wings solitons was announced by Chan, Lai, and Lee~\cite{ChanLaiLee}. Moreover, Bamler et al.~\cite{BCC} have also announced the existence of four-dimensional non-Kähler expanding Ricci solitons.

In this note, we introduce a method for constructing complete gradient Ricci solitons as fiber bundles endowed with warped metrics, which we call \emph{gradient Ricci soliton warped flat bundles}. Our construction is inspired by earlier examples of gradient Ricci solitons with warped product structures, whose prototype stems from a result of Bryant (although unpublished by him, it is discussed in~\cite{ChowEtal}). For instance, Ivey~\cite{Ivey} obtained a one-parameter family of complete noncompact gradient steady Ricci solitons of doubly warped product type on $\mathbb{R}^{k+1}\times N$, where $N$ is a compact Einstein manifold with positive scalar curvature. More recently, Angenent and Knopf~\cite{AngenentKnopf} constructed complete gradient shrinking Ricci solitons within a doubly warped product setting. The framework we develop provides a natural approach to reinterpret and extend these examples in the context of warped flat bundles.

The technique we employ is inspired by the generalization of the notion of warped product to bundles, as done by Bishop and O'Neill~\cite[p.~29]{BO}. More precisely, they observed that, given Riemannian manifolds $(B,g_B)$ and $(F,g_F)$, considering the universal covering $\tilde B$ of $B$ as a $\pi_1(B)$-bundle over $B$, a homomorphism $h\colon\pi_1(B)\to {\rm Iso}(F)$ gives rise to a Riemannian manifold $M=(\tilde B\times F)/\pi_1(B)$ with a metric $g$ and flat fiber bundle structure $F\to M\stackrel{\pi}{\to} B$, whose bundle charts $U\times F\to \pi^{-1}(U)$ are isometries. So, taking a smooth warping function $f$ on $B$, one defines a warped metric $g_f$ on $M$ by
\begin{equation}\label{WarpedMetric}
{g_f}(X,Y):=g(\mathcal{H}(X),\mathcal{H}(Y))+(f\circ\pi)^2g(\mathcal{V}(X),\mathcal{V}(Y)), \quad X,Y\in\mathfrak X(M),
\end{equation}
where $\mathcal{H}(X)$ and $\mathcal{V}(X)$ stand for the horizontal and vertical parts of $X\in\mathfrak X(M)$, respectively.

More generally, consider a principal bundle $G\to P\to B$, a manifold $F$, and an integrable horizontal distribution $\mathcal{H}$. We would like to establish necessary and sufficient conditions for the existence of a 
complete warped metric $g_f$ on 
\begin{equation*}
M=M(f,\psi):=(P\times F)/G
\end{equation*}
such that $(M,g_f,\psi)$ is a complete gradient Ricci soliton for some smooth potential function $\psi$ on $M$. In this case, we 
call $M(f,\psi)$ a \emph{gradient Ricci soliton warped flat bundle}. For our purposes, the following result of Feitosa et al.~\cite[Prop.~3]{FFG} will play a fundamental role.

\begin{lemma*}[\cite{FFG}]
Let $(B,g_B)$ be a complete Riemannian manifold. Assume there exist smooth functions $f>0$ and $\varphi$ on $B$ satisfying
\begin{equation}\label{EqProp1.16}
\ric+\nabla^2\varphi=\lambda g_B+\frac{m}{f}\nabla^2f\quad\mbox{and}
\quad 2\lambda\varphi-|\nabla\varphi|^2+\Delta\varphi+\frac{m}{f}\nabla\varphi(f)=c,
\end{equation}
for some constants $\lambda, m, c\in\mathbb{R}$, with $m\neq0$. Then, the functions $f$ and $\varphi$ satisfy
\begin{equation}\label{EqProp1.16 2}
\lambda f^2+f\Delta f+(m-1)|\nabla f|^2-f\nabla\varphi(f)=\mu
\end{equation}
for some constant $\mu\in\mathbb{R}$.
\end{lemma*}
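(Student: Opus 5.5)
The plan is to show that the gradient of
$$\mu:=\lambda f^2+f\Delta f+(m-1)|\nabla f|^2-f\nabla\varphi(f)$$
vanishes identically. Since $B$ is connected, this forces $\mu$ to be constant. The only tools are the first equation in \eqref{EqProp1.16}, its trace, its divergence combined with the contracted second Bianchi identity $\operatorname{div}\ric=\tfrac12\nabla R$, and the gradient of the second equation in \eqref{EqProp1.16}. Completeness of $B$ is not really needed for this local computation; it only sits in the statement for later use.

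First I record the ingredients.

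\emph{Trace.} Tracing the first equation gives
$$R+\Delta\varphi=n\lambda+\frac{m}{f}\Delta f,$$
so
$$\nabla R=-\nabla\Delta\varphi+\frac{m}{f}\nabla\Delta f-\frac{m}{f^2}\Delta f\,\nabla f.$$

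\emph{Divergence.} Using the Bochner-type identity $\operatorname{div}\nabla^2 u=\ric(\nabla u)+\nabla\Delta u$, taking the divergence of the first equation yields
$$\tfrac12\nabla R+\ric(\nabla\varphi)+\nabla\Delta\varphi=\frac{m}{f}\bigl(\ric(\nabla f)+\nabla\Delta f\bigr)-\frac{m}{f^2}\nabla^2f(\nabla f).$$

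\emph{Substitutions.} Wherever $\ric(\nabla\varphi)$ and $\ric(\nabla f)$ occur, I replace them using the first equation:
$$\ric(X)=\lambda X-\nabla^2\varphi(X)+\frac{m}{f}\nabla^2 f(X).$$
In particular, $\ric(\nabla\varphi)$ becomes
$$\lambda\nabla\varphi-\tfrac12\nabla|\nabla\varphi|^2+\frac{m}{f}\nabla^2f(\nabla\varphi).$$

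\emph{Second equation.} Differentiating the second equation in \eqref{EqProp1.16} gives
$$\nabla\Delta\varphi=-2\lambda\nabla\varphi+\nabla|\nabla\varphi|^2-m\,\nabla\!\Big(\frac{\nabla\varphi(f)}{f}\Big).$$

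Next, I insert $\nabla R$ from the trace, together with these substitutions, into the divergence identity. All terms involving only $\varphi$ should cancel:
\begin{itemize}
\item $\lambda\nabla\varphi$, $\nabla|\nabla\varphi|^2$ and $\nabla\Delta\varphi$ drop out, thanks to the second equation;
\item this is precisely why the constant $c$ and the specific coefficient $2\lambda$ appear in \eqref{EqProp1.16}.
\end{itemize}
What remains should be a multiple of $m/f$ or $m/f^2$ applied to an expression in $f$, $\nabla f$, $\nabla^2f$, $\nabla\Delta f$ and mixed terms $\nabla^2f(\nabla\varphi)$, $\nabla^2\varphi(\nabla f)$, $\nabla\varphi(f)\nabla f$.

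Then I expand $\nabla\mu$ directly:
$$\nabla\mu=2\lambda f\nabla f+\Delta f\,\nabla f+f\nabla\Delta f+2(m-1)\nabla^2f(\nabla f)-\nabla\varphi(f)\nabla f-f\nabla^2\varphi(\nabla f)-f\nabla^2f(\nabla\varphi).$$
The aim is to recognize the leftover identity as $\dfrac{m}{f^2}\nabla\mu=0$, up to sign. Since $m\neq0$ and $f>0$, this gives $\nabla\mu=0$.

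The main obstacle is the bookkeeping in this last matching. One must check that the term $\tfrac{m}{f}\ric(\nabla f)$ produces exactly the $\lambda f\nabla f$, $-f\nabla^2\varphi(\nabla f)$ and $(m-1)\nabla^2f(\nabla f)$ contributions. In particular, the $-1$ in $m-1$ has to come from the competition between
\begin{itemize}
\item $\frac{m^2}{f^2}\nabla^2f(\nabla f)$, arising from $\ric(\nabla f)$,
\item $-\frac{m}{f^2}\nabla^2f(\nabla f)$, from the divergence,
\item the $\frac12\cdot\frac{m}{f^2}\Delta f\nabla f$ term coming from $\nabla R$.
\end{itemize}
If the coefficients do not line up exactly, I would first double-check the factor $\tfrac12$ in the Bianchi identity and the derivative of $\tfrac{m}{f}\nabla\varphi(f)$. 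I expect them to match, since the statement was designed so that they do.
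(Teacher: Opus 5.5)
Your proposal is correct. The paper gives no proof of this lemma: it is imported verbatim as \cite[Prop.~3]{FFG}. Your divergence and contracted-Bianchi computation therefore supplies a self-contained argument. It uses only connectedness of $B$, not completeness, as you observed.

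Carried out, your plan closes exactly. Move everything in the divergence identity to one side, then insert
\begin{itemize}
\item the traced equation,
\item the substitutions for $\ric(\nabla f)$ and $\ric(\nabla\varphi)$,
\item the gradient of the second equation.
\end{itemize}
The result is precisely
\[
\frac{m}{2f^{2}}\,\nabla\mu=0 .
\]
So the discrepancy with your guess $\frac{m}{f^2}\nabla\mu=0$ is a factor $\tfrac12$, not a sign.

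One correction to your anticipated bookkeeping. The coefficient $\frac{m(m-1)}{f^2}$ of $\nabla^2 f(\nabla f)$ comes only from $\frac{m^2}{f^2}\nabla^2 f(\nabla f)$, produced by $\frac{m}{f}\ric(\nabla f)$, and $-\frac{m}{f^2}\nabla^2 f(\nabla f)$, produced by the divergence. The term $\frac{m}{2f^2}\Delta f\,\nabla f$ coming from $\nabla R$ does not enter that coefficient. It supplies the $\Delta f\,\nabla f$ term of $\nabla\mu$. The mixed terms $-f\nabla^2\varphi(\nabla f)$, $-f\nabla^2 f(\nabla\varphi)$ and $-\nabla\varphi(f)\nabla f$ of $\nabla\mu$ arise by combining two sources. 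One is the mixed terms inside $\frac{m}{f}\ric(\nabla f)$ and $\ric(\nabla\varphi)$. The other is $\frac m2\nabla\bigl(\nabla\varphi(f)/f\bigr)$, which comes from differentiating the second equation.
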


We are now in a position to state our existence result. 

\begin{theorem}\label{characterization Ricci warped}
Suppose that $M(f,\psi)=(P\times F)/G$ is a gradient Ricci soliton warped flat bundle obtained from the principal bundle $G\to P\to B$. Then $\psi$ is the lift of a smooth function $\varphi$ on $B$ to $M$, which satisfies both equalities in~\eqref{EqProp1.16} with $m={\rm dim}\, F.$ Moreover, the Ricci tensor of $(F,g_F)$ is given by $Ric_F=\mu g_F$, where $\mu$ is a constant satisfying~\eqref{EqProp1.16 2}. Conversely, suppose there are two smooth functions $\varphi$ and $f>0$ on a Riemannian manifold $(B,g_B)$ satisfying the equations in~\eqref{EqProp1.16} and an $m$-dimensional Riemannian manifold $(F,g_F)$ such that $Ric_F=\mu g_F$ with $\mu$ given by~\eqref{EqProp1.16 2}. Then, there exists a gradient Ricci soliton warped flat bundle $M(f,\psi)$ with base $B$ and fiber $F$, where $P=\tilde B$ and $G=\pi_1(B)$.
\end{theorem}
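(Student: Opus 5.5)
The plan is to reduce everything to a local computation. Since $\mathcal{H}$ is integrable, the bundle is flat, so each point of $B$ has a neighbourhood $U$ with a bundle chart $U\times F\to\pi^{-1}(U)$. This chart is an isometry from the warped product $U\times_f F$ with metric $g_B+f^2g_F$ onto $(\pi^{-1}(U),g_f)$; here $g_F$ is a fixed $G$-invariant metric on $F$, so that $G$ acts on $F$ by isometries. The soliton equation is local, so we may work on $U\times_f F$.

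For the direct implication, we use O'Neill's formulas for a warped product with $m=\dim F$:
\begin{align*}
\ric(X,Y)&=\ric_B(X,Y)-\tfrac{m}{f}\nabla^2 f(X,Y),\qquad \ric(X,V)=0,\\
\ric(V,W)&=\ric_F(V,W)-\Big(\tfrac{\Delta f}{f}+(m-1)\tfrac{|\nabla f|^2}{f^2}\Big)g_f(V,W),
\end{align*}
for $X,Y$ horizontal and $V,W$ vertical. We then split $\nabla^2\psi$ into its horizontal, mixed and vertical parts.

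\begin{enumerate}
\item The mixed equation $\nabla^2\psi(X,V)=0$ reads $XV\psi-\tfrac{X(f)}{f}V\psi=0$. This says that $V(\psi)/f$ is constant along horizontal directions, so $\psi=\alpha(x)+f(x)\beta(y)$ locally.
\item The horizontal equation then gives $\ric_B+\nabla^2\alpha+\beta\nabla^2 f=\lambda g_B+\tfrac{m}{f}\nabla^2 f$. Here $\beta$ varies on $F$ while everything else depends on $B$, so either $\beta$ is constant (and can be absorbed into $\alpha$) or $\nabla^2 f\equiv0$.
\item If $\nabla^2 f\equiv 0$, we argue separately. The vertical equation forces $\nabla^2_F\beta=(\text{const})\,g_F$, and combining this with completeness of $(M,g_f)$ should show that $\psi$ still descends, or else reduce to a product case that is handled directly.
\end{enumerate}

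This rigidity step, showing that $\psi$ is basic, is where I expect the main obstacle to lie. Once it is done, we have $\psi=\varphi\circ\pi$, and the horizontal part is exactly the first equation in \eqref{EqProp1.16}.

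The vertical part involves $\nabla^2\psi(V,W)=\tfrac{1}{f}\nabla\varphi(f)\,g_f(V,W)$. It yields
\[
\ric_F=\big(\lambda f^2+f\Delta f+(m-1)|\nabla f|^2-f\nabla\varphi(f)\big)g_F .
\]
The left side depends only on $F$ and the coefficient only on $B$, so both are a constant $\mu$. This gives \eqref{EqProp1.16 2} and $\ric_F=\mu g_F$.

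For the second equation in \eqref{EqProp1.16}, we use Hamilton's identity on $M$: $S+|\nabla\psi|^2-2\lambda\psi$ is constant. We compute the scalar curvature $S=S_B-\tfrac{2m}{f}\Delta f+\tfrac{m\mu}{f^2}-m(m-1)\tfrac{|\nabla f|^2}{f^2}$. We then trace the first equation to get $S_B=n\lambda+\tfrac{m}{f}\Delta f-\Delta\varphi$, and use \eqref{EqProp1.16 2} to eliminate $\mu$. After simplification the identity becomes $2\lambda\varphi-|\nabla\varphi|^2+\Delta\varphi+\tfrac{m}{f}\nabla\varphi(f)=c$.

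For the converse, we take $P=\tilde B$, $G=\pi_1(B)$ acting on $F$ through some homomorphism $h\colon\pi_1(B)\to{\rm Iso}(F)$ (the trivial one suffices), and build $M=(\tilde B\times F)/\pi_1(B)$ with the warped metric \eqref{WarpedMetric}. We set $\psi=\varphi\circ\pi$. Locally $M$ is the warped product $B\times_f F$, and we substitute into the formulas above. The horizontal and mixed components hold by the first equation in \eqref{EqProp1.16}. The vertical component holds by $\ric_F=\mu g_F$ together with \eqref{EqProp1.16 2}, which the Lemma guarantees is consistent with $\mu$. Completeness of $g_f$ follows from completeness of $B$ and $F$: the projection $\pi$ is a Riemannian submersion, so horizontal lifts of Cauchy sequences behave well, and the fibres are compact or complete with $f$ locally bounded below.
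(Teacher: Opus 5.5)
There is a genuine gap, and it is the step you flagged: step~3 is not an argument, and your proposed fallback (``reduce to a product case that is handled directly'') cannot work, because in the product case the conclusion is false. Take $B=\mathbb{R}^n$, $F=\mathbb{R}^m$, $G$ trivial, $f\equiv 1$ and $\psi(x,y)=\frac{\lambda}{2}(|x|^2+|y|^2)$ with $\lambda>0$. This is a complete gradient shrinker whose potential is not constant on the fibres. Now take instead $B=\mathbb{R}$, $f\equiv1$, $\lambda=0$, and let $F$ be Hamilton's cigar carrying its own potential. This gives a complete steady soliton for which $\ric_F=\mu g_F$ also fails. So the direct implication needs $f$ nonconstant.

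The step also cannot be settled chart by chart. On $(0,\infty)\times_t\mathbb{S}^m=\mathbb{R}^{m+1}\setminus\{0\}$, the Gaussian potential centred away from the origin has exactly your form $\alpha(t)+t\beta(\theta)$, with $\beta$ nonconstant and $f''=0$. The paper does not do your computation at all. It gets basicness of $\psi$ by passing to local warped-product charts and quoting Borges--Tenenblat (Lemma~\ref{Pot-Func-GRSWB}), so the same nonconstancy assumption is implicit there.

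Assuming $f$ is nonconstant, your steps~1--2 already finish the job, provided you argue globally rather than case by case.
\begin{enumerate}
\item Since $B$ is complete and $f>0$, $\nabla^2 f\equiv 0$ would make $f\circ\gamma$ affine and positive on every complete geodesic $\gamma$, hence $f$ constant. So some $x_0$ has $\nabla^2 f(x_0)\neq 0$.
\item Step~2 then forces $\beta$ to be constant over a neighbourhood of $x_0$, i.e.\ $V(\psi)=0$ along $\pi^{-1}(x_0)$.
\item Step~1 says the vertical $1$-form $\frac{1}{f}\,d\psi|_{\mathcal V}$ is constant along horizontal leaves, because the transition maps are locally constant isometries of $F$.
\item Every leaf of the flat connection meets $\pi^{-1}(x_0)$, since horizontal lifts of curves always exist.
\end{enumerate}
Hence $V(\psi)\equiv 0$ on $M$ and $\psi=\varphi\circ\pi$.

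The rest of your direct implication is correct. It replaces the paper's appeal to Propositions~1 and~2 of Feitosa et al.\ with a self-contained argument. Separation of variables in the vertical equation gives $\ric_F=\mu g_F$ and~\eqref{EqProp1.16 2}. After your substitutions, $S=(n+m)\lambda-\Delta\varphi-\frac{m}{f}\nabla\varphi(f)$, so Hamilton's identity $S+|\nabla\psi|^2-2\lambda\psi=\mathrm{const}$ is exactly the second equation in~\eqref{EqProp1.16}.

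Your converse is the paper's argument, and trivial $h$ is fine. Your completeness remark tacitly assumes $B$ and $F$ complete, which the converse as stated does not; the paper is silent on completeness there too.
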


Theorem~\ref{characterization Ricci warped}, together with Vilms' work~\cite[Thms.~2.2 and 3.6]{Vilms}, yields the following result, which provides a more refined converse to Theorem~\ref{characterization Ricci warped} under suitable assumptions.

\begin{theorem}\label{RecThmCRW}
Let $(B,g_{B})$ and $(F,g_F)$ be  complete Riemannian manifolds. Suppose that $f>0$ and $\varphi$ are smooth functions on $B$ satisfying the equations in~\eqref{EqProp1.16}, and that the Ricci tensor $\ric_F$ of $(F,g_F)$ satisfies $\ric_F=\mu g_F$, where $\mu$ is the constant in~\eqref{EqProp1.16 2}. Under these conditions, the following assertions hold:
\begin{enumerate}[label=(\roman*)]
\item [\rm (i)]\label{(i)} Let $G\to P\to B$ be an integrable principal bundle with a horizontal distribution $\mathcal{H}$. Assume that there exists a nontrivial homomorphism $h: G\to Iso(g_F)$. Then $M(f,\psi)$ is a gradient Ricci soliton warped flat bundle having $\mathcal H$ as an integrable horizontal distribution, and the fibers are totally umbilical and  diffeomorphic to $F.$
\smallskip
\item [\rm (ii)] \label{(ii)} Assume that there exists a finite normal Riemannian covering $\beta: \hat{B} \to B$ with covering transformation group $G$. Then $(\hat{B} \times F)/G$, endowed with the warped metric and potential function induced by the quotient projection, is a gradient Ricci soliton with integrable horizontal distribution and totally umbilical fibers.
\smallskip 
\item[\rm (iii)]\label{(iii)} If there exists a closed subgroup $G\subset{\rm Iso}(B)$ acting freely on $F$, such that $f$ and $\varphi$ are both invariant by $G$, then $(F\times B)/G$, endowed with the metric and potential function induced by the quotient projection, is a gradient Ricci soliton.
\end{enumerate}
\end{theorem}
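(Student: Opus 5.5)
The common core of all three items is the local picture. In a bundle chart $U\times F\to\pi^{-1}(U)$, which is an isometry for the unwarped metric $g$ because $\mathcal H$ is integrable, the warped metric $g_f$ from \eqref{WarpedMetric} is just the warped product $g_B+f^2g_F$ on $U\times F$. I would first record O'Neill's formulas for a warped product $B\times_f F$ with $\dim F=m$:
\begin{align*}
{\rm Ric}(X,Y)&=\ric_B(X,Y)-\tfrac{m}{f}\nabla^2f(X,Y),\qquad {\rm Ric}(X,V)=0,\\
{\rm Ric}(V,W)&=\ric_F(V,W)-\Big(\tfrac{\Delta f}{f}+(m-1)\tfrac{|\nabla f|^2}{f^2}\Big)g_f(V,W).
\end{align*}
For a lifted function $\psi=\varphi\circ\pi$ the Hessian is $\nabla^2\psi(X,Y)=\nabla^2\varphi(X,Y)$ and $\nabla^2\psi(X,V)=0$. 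Using the umbilicity of the fibers, $\nabla^2\psi(V,W)=\frac{\nabla\varphi(f)}{f}g_f(V,W)$.

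Substituting into \eqref{eq:gradient-ricci-soliton} splits the soliton equation into two parts. The horizontal part is exactly the first equation of \eqref{EqProp1.16}. The vertical part reads $\ric_F=\big(\lambda f^2+f\Delta f+(m-1)|\nabla f|^2-f\nabla\varphi(f)\big)g_F$. By the Lemma of \cite{FFG}, the bracket is the constant $\mu$ of \eqref{EqProp1.16 2}, so the vertical equation is equivalent to $\ric_F=\mu g_F$. Hence, locally, $(\pi^{-1}(U),g_f,\varphi\circ\pi)$ is a gradient Ricci soliton.

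To globalize, I would check that everything used is invariant under the structure group. In (i), $G$ acts on $P\times F$ through $h$ by isometries of $g_F$, so $g_B+f^2g_F$ and $\varphi$ (a function on $B$) are $G$-invariant and descend to $M=(P\times F)/G$. The soliton equation is local, so it holds on $M$. The horizontal distribution $\mathcal H$ descends and stays integrable. The fibers $\pi^{-1}(b)\cong F$ are totally umbilical with mean curvature vector $-\nabla f/f$, as in any warped product. Completeness of $M$ follows from completeness of $B$ and $F$ via \cite[Thm.~2.2]{Vilms}: the projection is a Riemannian submersion with complete base and complete fibers, and horizontal lifts of curves exist globally.

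Items (ii) and (iii) are then specializations of this argument:
\begin{enumerate}[label=(\alph*)]
\item In (ii), I would take $P=\hat B$, a principal $G$-bundle over $B$ whose flat connection is given by the covering map. I would pull $f,\varphi$ back via $\beta$; since $\beta$ is a local isometry, \eqref{EqProp1.16} still holds on $\hat B$. Applying the local computation and quotienting by the deck group, with $G$ acting on $F$ by isometries, gives the result. This requires a homomorphism into ${\rm Iso}(F)$; the trivial action is allowed here.
\item In (iii), $B\times_f F$ is already a gradient soliton by the local computation, now performed globally. $G$ acts by isometries of $B$, and $f,\varphi$ are $G$-invariant, so the product action preserves $g_B+f^2g_F$ and $\psi$. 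The action is free because it is free on $F$, and proper because $G$ is closed in ${\rm Iso}(B)$. So the quotient is a manifold, the projection is a local isometry, and the equation descends.
\end{enumerate}

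The main obstacle I expect is in (iii). There one must make sure that the $G$-action on $F$ is by isometries of $g_F$, which is implicit in the statement and would need to be assumed. One must also check properness, so that the quotient is Hausdorff and complete; for this I would use \cite[Thm.~3.6]{Vilms}. The vertical Hessian term, with its sign convention, needs a careful check as well.
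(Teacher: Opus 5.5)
Your route matches the paper's. Over flat bundle charts $g_f$ is the warped product $g_B+f^2g_F$. The Bishop--O'Neill computation from the converse part of the proof of Theorem~\ref{characterization Ricci warped} then reduces the soliton equation to the first equation in \eqref{EqProp1.16} together with $\ric_F=\mu g_F$. Finally, everything is pushed down through $h$, through the deck group of $\beta$, and through the $G$-action in (iii).

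The step that does not hold as written is your claim in (iii) that ``the projection is a local isometry''. The statement only asks $G$ to be a closed subgroup of ${\rm Iso}(B)$. If $\dim G>0$, the quotient $(B\times F)/G$ has dimension $\dim B+\dim F-\dim G$. Then $B\times_fF\to(B\times F)/G$ is only a Riemannian submersion, and O'Neill's $A$- and $T$-terms enter the Ricci tensor of the quotient.

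Concretely, take $B=\mathbb{R}^2$, $f\equiv1$, $\varphi=\frac{\lambda}{2}|x|^2$ with $\lambda>0$, and $F=\mathbb{S}^3(r)$ with $2/r^2=\lambda$. Let $G=SO(2)$ act by rotations on $\mathbb{R}^2$ and by the Hopf action on $\mathbb{S}^3$. Every hypothesis holds. Let $K$ be the generating Killing field, so $|K|^2=r^2+t^2$ with $t=|x|$. On the quotient, the radial component of $\ric+\nabla^2\varphi$ for the induced potential is $\lambda+2|A_{\partial_t}|^2+|K|^{-1}\partial_t^2|K|=\lambda+3r^2/(r^2+t^2)^2$. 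This is not constant in $t$, so the quotient is not a gradient Ricci soliton for any constant.

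The paper's own proof of (iii) (``locally isometric to $(M,\hat g)$'') makes the same tacit assumption. So this is a defect of item (iii) as stated, not something your approach misses relative to the paper. Once $G$ is taken discrete (as in Example~\ref{exam01}) and acting on $F$ by isometries (which you rightly flag), your argument is complete.

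In (i), by contrast, $\dim G>0$ is harmless. There $(P\times F)/G$ still has dimension $\dim B+\dim F$, and $g_f$ is defined through the flat charts, as in your first paragraph. Your phrase that $g_B+f^2g_F$ ``descends'' from $P\times F$ should be read in that sense.
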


It is worth noting that the Ricci flow literature already contains examples of gradient Ricci solitons arising from product and quotient constructions (see, e.g., the survey article of Cao, Chen and Zhu~\cite{CaoChenZhu}). For instance, Naber~\cite{Naber} showed that any four-dimensional complete noncompact shrinking Ricci soliton with bounded nonnegative curvature operator is isometric to either $\mathbb{R}^4$, or a finite quotient of $\mathbb{S}^3\times\mathbb{R}$ or $\mathbb{S}^2\times\mathbb{R}^2$. More recently, Cheng and Zhou~\cite{ChengZhou}, together with the works of Petersen and Wylie~\cite{PetersenWylie} and Fernández-López and García-Río~\cite{LopezRio}, proved that a four-dimensional complete noncompact gradient shrinking Ricci soliton has constant scalar curvature if and only if it is rigid, i.e., it is isometric to either an Einstein manifold or a finite quotient of the Gaussian shrinking soliton $\mathbb{R}^4$, $\mathbb{S}^3 \times \mathbb{R}$ or $\mathbb{S}^2 \times \mathbb{R}^2$. In particular, the model $\mathbb{S}^2\times\mathbb{R}^2$ is especially close to the point of view of this note, since it is a product-type shrinking soliton with Einstein fiber and Euclidean factor.

From the point of view of our construction, the previous examples are directly related to item~(iii) of Theorem~\ref{RecThmCRW}, which allows one to start from a product or warped product soliton and then produce a new soliton on the quotient $(F\times B)/G$, provided that the warping and potential functions are $G$-invariant. Ivey's construction provides complete gradient steady Ricci soliton warped products with Einstein fibers and potential functions depending only on the base, and thus fit directly into the framework of item~(iii) of Theorem~\ref{RecThmCRW}. Likewise, the complete gradient shrinking Ricci solitons constructed by Angenent and Knopf~\cite[Thm.~B]{AngenentKnopf}, although originally formulated as doubly warped products, can be reinterpreted (by incorporating one of the spherical factors into the base) as warped products with Einstein fibers and radial potential functions, and therefore also fall within the scope of our construction. In the product case, such as the model $\mathbb{S}^2\times\mathbb{R}^2$ appearing in Naber's classification, this corresponds simply to the special case of a constant warping function. Hence, item~(iii) of Theorem~\ref{RecThmCRW} applies in at least two distinct directions: it yields twisted quotient versions of Ivey's steady soliton warped products, and it also produces twisted quotient versions of shrinking solitons of product or doubly warped product type, such as $\mathbb{S}^2\times\mathbb{R}^2$ and the example of Angenent and Knopf. We present a concrete example in the next section to illustrate the preceding discussion.

\section{Proofs of the theorems}
This section is devoted to the proofs. Before presenting them, we first provide the example announced earlier.

\begin{example}\label{exam01}
\textnormal{Let $g_{B}=dt^2+a(t)^2\,g_{\mathbb{S}^k}$ be a Riemannian metric on $B=\mathbb{R}^{k+1}$ so that the warped metric $g=g_{B}+b(t)^2\,g_{\mathbb{S}^m}$ is a complete gradient Ricci soliton on $\mathbb{R}^{k+1}\times \mathbb{S}^m$ with a potential function which is the lift of a smooth function $\varphi(t)$, where $a(t)>0$, $b(t)>0$ and $\varphi(t)$ are nonconstant smooth functions arising from the constructions of Angenent and Knopf~\cite{AngenentKnopf} or Ivey~\cite{Ivey}. Now, consider $G=\mathbb{Z}_p$, with $p\ge 2$, and assume that $\mathbb{Z}_p$ acts freely and isometrically on $\mathbb{S}^m$ (for instance, this holds for any $p$ when $m$ is odd via the standard Hopf action, and it holds for $p=2$ in any dimension via the antipodal map, see Hatcher's book~\cite[p.~144]{Hatcher} for more details). Next, choose any isometric action of $\mathbb{Z}_p$ on $B=\mathbb{R}^{k+1}$ preserving the radial coordinate $t$ (e.g., a rotation of order $p$ in $\mathbb{R}^{k+1}$), so, $f=b(t)$ and $\varphi=\varphi(t)$ are automatically $\mathbb{Z}_p$-invariant, and the diagonal action on $\mathbb{S}^m\times B$ is free. Hence, from item (iii) of Theorem~\ref{RecThmCRW}, the quotient manifold
\[(\mathbb{S}^m\times B)/\mathbb{Z}_p \;=\; \mathbb{S}^m\times_{\,\mathbb{Z}_p} B\]
is a complete gradient Ricci soliton with metric and potential function both induced by the quotient projection.}
\end{example}

\begin{remark}
\textnormal{Note that, since in Example~\ref{exam01} the functions $a(t)$, $b(t)$, and $\varphi(t)$ are given by the constructions of Ivey or Angenent and Knopf, it follows that $\mathbb{S}^m \times_{\,\mathbb{Z}_p} B$ is, respectively, a gradient steady Ricci soliton or a gradient shrinking Ricci soliton.}
\end{remark}

We now recall a fundamental result from the theory of warped products and prove a key lemma to be used in our arguments. To this end, we follow the notation and terminology of Bishop and O'Neill~\cite{BO}. We begin by describing the geometric features of the metric $g_f$ in~\eqref{WarpedMetric}, which will be denoted by the superscript “$^-$”.

\begin{lemma}[\cite{BO}]\label{conex}
Suppose $X$ and $Y$ are basic vector fields which are $\pi$-related to $\bar X$ and $\bar Y$, respectively, and suppose $U$ and $V$ are vertical vector fields tangent to the fiber $F_b$, with $b=\pi(p)$, the we have:
\begin{enumerate}
\item $\bar D_XY$ is the horizontal lift of $^B\!D_{\bar X}\bar Y$;\label{P1 Lemma 1}
\smallskip
\item $\mathcal{H}(\bar D_UV)=-\frac{\bar g(U,V)}{\tilde{f}}\bar\nabla\tilde{f}$ and $\mathcal{V}(\bar D_UV)=\,\! ^{F_b}\!D_UV$;\label{P4 Lemma 1}
\smallskip
\item $\overline\ric(X,Y)=\,^B\!\ric(X,Y)-\frac{m}{\tilde f}H^f(X,Y)$;\label{P1 Lemma 2}
\smallskip
\item $\overline\ric(X,U)=0$;\label{P2 Lemma 2}
\smallskip
\item $\overline \ric(U,V)=\ric_{F_b}(U,V)-\left[\tilde{f}\tilde{\Delta} f+(m-1)|\nabla \tilde{f}|^2\right]g_{F_b}(U,V)$,
where $\tilde{\Delta}f=(\Delta f)\circ \pi$.\label{P3 Lemma 2}
\end{enumerate}
\end{lemma}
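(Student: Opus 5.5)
The statement is Lemma~\ref{conex}, which the paper attributes to Bishop and O'Neill. I would prove it by reducing to the classical warped product $B\times_f F$. The key point is that $g_f$ is locally a warped product. Since the horizontal distribution $\mathcal H$ is integrable (flat bundle), each point has a neighborhood $\pi^{-1}(U)$ with $U$ simply connected. On it, the bundle chart $U\times F\to\pi^{-1}(U)$ is an isometry for $g$, and it carries $\mathcal H$ to $TU\times 0$ and $\mathcal V$ to $0\times TF$. Pulling back, $g_f$ becomes $g_B+(f\circ\pi_1)^2 g_F$ on $U\times F$. All five assertions are local and tensorial (or involve only the Levi-Civita connection), so they may be verified on this warped product. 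Basic fields $X,Y$ correspond to lifts of $\bar X,\bar Y$, and vertical fields tangent to $F_b$ correspond to fields on the $F$-factor.

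For items (1) and (2), I would use the Koszul formula for $\bar g=g_B+\tilde f^2g_F$.

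For (1): if $X,Y$ are lifts, then $\bar g(X,Y)$ is constant along the fibers, and $[X,Y]$ is the lift of $[\bar X,\bar Y]$. Moreover, $[X,V]=0$ for $V$ a lift from $F$. The Koszul formula then shows that $\bar D_XY$ has no vertical component and that its horizontal part equals the lift of ${}^B\!D_{\bar X}\bar Y$.

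For (2): pairing $\bar D_UV$ with a lift $X$ gives $2\bar g(\bar D_UV,X)=-X\bar g(U,V)$. Since $\bar g(U,V)=\tilde f^2 g_F(U,V)$, this equals $-2\tilde f\,X(\tilde f)\,g_F(U,V)=-2\bar g(U,V)\,X(\tilde f)/\tilde f$, which is the horizontal formula. The vertical part comes from Koszul restricted to fiber fields, where the factor $\tilde f^2$ is constant along the fiber. This gives the Levi-Civita connection of $(F_b,\tilde f(b)^2g_F)$. The same computation also gives $\bar D_XV=\bar D_VX=(X\tilde f/\tilde f)V$, which I will need below.

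Items (3)--(5) come from the Ricci formulas in O'Neill's Corollary 7.43, or equivalently from computing curvatures with (1) and (2). I would take an adapted local frame: $E_i$ lifting a $g_B$-orthonormal frame, and $U_j$ a $\bar g$-orthonormal vertical frame. The sectional-type terms are the following.

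\begin{itemize}
\item $\bar R(X,U)U$ contributes $-\frac{1}{\tilde f}H^f(X,\cdot)$ per vertical direction. Summing over the $m$ vertical directions gives the correction in (3).
\item For (4), the mixed components vanish because the fibers are totally umbilic with normal $-\nabla\tilde f/\tilde f$, the horizontal leaves are totally geodesic, and $\tilde f$ depends only on the base. Codazzi-type terms therefore vanish.
\item For (5), Gauss' equation for the umbilic fibers contributes $-(m-1)|\nabla\tilde f|^2/\tilde f^2$ times $\bar g(U,V)$. The horizontal directions contribute $-\Delta f/\tilde f$ times $\bar g(U,V)$. Converting $\bar g=\tilde f^2g_{F_b}$ produces $\tilde f\tilde\Delta f+(m-1)|\nabla\tilde f|^2$ multiplying $g_{F_b}$, with the fiber Ricci unchanged under constant scaling.
\end{itemize}

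The main obstacle is bookkeeping rather than an idea. Two things need care: first, that the local trivializations respect the splitting, which is exactly where flatness and integrability of $\mathcal H$ enter; second, the sign and normalization conventions in (5), namely whether $g_{F_b}$ means $g_F$ or the induced metric. I would fix $g_{F_b}$ to be $g_F$ transported to the fiber, which matches the stated formula.
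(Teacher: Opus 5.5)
Your proposal is correct and follows essentially the same route as the paper, which gives no proof of its own and simply cites Bishop--O'Neill. Their argument rests on the same observation you use: the bundle charts $U\times F\to\pi^{-1}(U)$ are isometries, so $g_f$ is locally the warped product $g_B+(f\circ\pi_1)^2g_F$, and the standard warped-product connection and Ricci formulas transfer verbatim. Your Koszul computations for (1)--(2), the umbilic-fiber and totally-geodesic-leaf reasoning for (4), and your reading of $g_{F_b}$ as the unwarped fiber metric all check out; that last choice is exactly what makes (5) consistent with the factor $\tilde f\tilde\Delta f+(m-1)|\nabla\tilde f|^2$.
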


The following lemma will be used in the proof of the main results.

\begin{lemma}\label{Pot-Func-GRSWB}
For any gradient Ricci soliton warped flat bundle $M(f,\psi)$, the potential function $\psi$ depends only on the base.
\end{lemma}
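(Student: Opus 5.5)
The plan is to compare the soliton equation \eqref{eq:gradient-ricci-soliton} for $\bar g=g_f$ with the Ricci formulas of Lemma~\ref{conex}, evaluated on mixed pairs $(X,U)$ with $X$ basic and $U$ vertical. The argument is local, so we work in a bundle chart $U\times F\to\pi^{-1}(U)$. Since $\mathcal H$ is integrable and the charts are isometries onto $(U\times F, g_B+f^2g_F)$, locally $g_f$ is just a warped product $B\times_f F$. Item~(4) of Lemma~\ref{conex} gives $\overline\ric(X,U)=0$, and $\bar g(X,U)=0$. Hence the soliton equation forces
\[
\bar\nabla^2\psi(X,U)=0 \qquad\text{for all basic } X \text{ and vertical } U .
\]

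Next I would compute this mixed Hessian directly:
\[
\bar\nabla^2\psi(X,U)=X(U\psi)-(\bar D_XU)\psi .
\]
In a warped product one has $\bar D_XU=\bar D_UX=\frac{X(\tilde f)}{\tilde f}U$ when $X$ is a horizontal lift and $U$ is a vertical lift; this is the remaining O'Neill formula, which is complementary to items~(1) and~(2). The condition therefore becomes
\[
X(U\psi)=\frac{X(\tilde f)}{\tilde f}\,U\psi ,\qquad\text{that is,}\qquad X\!\left(\frac{U\psi}{\tilde f}\right)=0 .
\]
So for each vertical lift $U$, the function $U\psi/\tilde f$ is constant along horizontal leaves. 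In local coordinates $(x,y)\in U\times F$ this says $\partial_{y}\psi(x,y)=f(x)\,h(y)$. Integrating in $y$ gives
\[
\psi(x,y)=\alpha(x)+f(x)\,\beta(y).
\]

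The main obstacle is to eliminate the term $f\beta$, i.e.\ to show $\beta$ is constant. To do this I would use the horizontal–horizontal components. Item~(3) of Lemma~\ref{conex} shows that $\overline\ric(X,Y)-\lambda\bar g(X,Y)$ depends only on $x$. The horizontal Hessian is $\bar\nabla^2\psi(X,Y)=\nabla^2_B\alpha+\beta\,\nabla^2_Bf$, using item~(1). Hence $\beta(y)\nabla_B^2 f$ is independent of $y$. If $\nabla^2_Bf\neq0$ somewhere, this forces $\beta$ to be constant.

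If instead $\nabla^2 f\equiv0$ locally, I would turn to the vertical–vertical equation, using items~(2) and~(5). With $\psi=\alpha+f\beta$ it reads
\[
\ric_F+f^2\nabla_F^2\beta\,(\text{suitably scaled})-\big(f\Delta f+(m-1)|\nabla f|^2\big)g_F+f\,\nabla f(\alpha+f\beta)\,g_F=\lambda f^2g_F .
\]
Comparing the coefficients of the different powers of $f$ as $x$ varies should force $\beta$ to satisfy $\nabla^2_F\beta=c\,\beta\,g_F$ together with an incompatible constraint, unless $\beta$ is constant. The degenerate subcase where $f$ is constant is a Riemannian product; there the splitting $\psi=\alpha(x)+\beta(y)$ is genuinely possible. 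So I expect this step to need care: either the definition of a warped flat bundle excludes it (nonconstant $f$), or "depends only on the base" is meant after absorbing the fiber part. I would state that caveat explicitly.

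Once $\psi$ is constant along each fiber in every chart, it is constant on the connected fibers $F_b$. Hence $\psi=\varphi\circ\pi$ for a smooth function $\varphi$ on $B$, smooth because $\pi$ is a submersion.
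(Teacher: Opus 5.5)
Your first two steps are correct. The mixed components, together with $\bar D_XU=\frac{X(\tilde f)}{\tilde f}U$, give $X(U\psi/\tilde f)=0$, hence $\psi=\alpha(x)+f(x)\beta(y)$ on a product chart. The horizontal components then kill $\beta$ as soon as $\nabla^2 f\neq0$ at some point of that chart.

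The gap is the branch where $\nabla^2 f\equiv0$ on the chart: the vertical--vertical equation contains no incompatibility. If $\nabla f\neq0$ there, it only forces $F$ to be Einstein and $\nabla_F^2\beta=(k-|\nabla f|^2\beta)\,g_F$ for a constant $k$, and first spherical harmonics satisfy this. Concretely, take $B=(0,\infty)$, $f(s)=s$ and $F=\mathbb{S}^m$ round. Then $ds^2+s^2g_{\mathbb{S}^m}$ is the flat metric on $\mathbb{R}^{m+1}\setminus\{0\}$, and $\psi=\frac{\lambda}{2}s^2+s\langle a,\theta\rangle$ (i.e.\ $\frac{\lambda}{2}|x|^2+\langle a,x\rangle$) satisfies $\ric+\bar\nabla^2\psi=\lambda\bar g$ with $\beta(\theta)=\langle a,\theta\rangle$ nonconstant. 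So the conclusion genuinely fails on a chart. Your plan, which you declare to be local, cannot close this case by any comparison of coefficients.

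What is missing is a global step. Since $U(\psi)/\tilde f$ is constant along horizontal leaves, consider a trivialization $\pi^{-1}(W)\cong W\times F$ over a connected open $W\subset B$. There $\psi$ is constant along one fiber over $W$ if and only if it is constant along all of them. Hence the set of $b\in B$ over which $\psi$ is fiberwise constant is open and closed, and by your horizontal argument it is all of $B$ once $\nabla^2f\neq0$ at a single point. If instead $\nabla^2f\equiv0$ on all of $B$, then $f$ is affine along every geodesic, and completeness of $B$ together with $f>0$ forces $f$ to be constant.

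So the lemma holds exactly when $f$ is nonconstant. Your product caveat is a genuine exception, not a technicality: $B=\mathbb{S}^2$ with $\ric=\lambda g_B$, $F=\mathbb{R}^2$, $f\equiv1$ and $\psi=\frac{\lambda}{2}|y|^2$ give a complete gradient shrinking soliton on a trivial warped flat bundle whose potential depends only on the fiber.

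The paper itself does no such computation. It uses Vilms' theorems to identify $M(f,\psi)$ locally with $B\times_fF$, then quotes Borges--Tenenblat's warped-product result on a chart $\mathcal V_B\times\mathcal V_F$. The cone example shows that this chart-level citation can only be valid under global hypotheses of the kind above. Your computation, once completed by the connectedness and completeness step, makes those hypotheses explicit.
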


\begin{proof}
Since we are considering $(P\times F)/G$ endowed with an integrable horizontal distribution and complete metrics on the base $B$ and on the fiber $F$, with this latter being invariant under the structural group $G$, it follows from the Vilms' theorem (see \cite[Thm.~3.6]{Vilms}) that the bundle projection $\pi: (P\times F)/G \to B$ is a nontrivial totally geodesic submersion, i.e., for each geodesic $\gamma(t)$ in $(P\times F)/G$, one has that $\pi(\gamma(t))$ is also a geodesic in $B$. Hence, by another Vilms' theorem (see~\cite[Thm.~2.2]{Vilms}), $(M,f,\psi):=(P\times F)/G$ is locally isometric to $B\times_fF$. Let $\iota:\mathcal{U}\to\mathcal{V}$ be a bundle chart isometry between open neighborhoods $\mathcal{V}=\mathcal{V}_B\times \mathcal{V}_F\subset B\times_f F$ and $\mathcal{U}\subset M(f,\psi)$. Since $M(f,\psi)$ is a gradient Ricci soliton, we have that $\mathcal{V}$ admits a gradient Ricci soliton warped product structure with potential function $\psi\circ \iota^{-1}$. By Borges and Tenenblat’s result~\cite{TenenblatBorges}, the potential function $\psi\circ\iota^{-1}$ is a lift of a smooth function on $\mathcal{V}_B$ to $\mathcal{V}$. Notice that this is enough to guarantee that $\psi$ depends only on the base, i.e., for any vertical vector field $V$, one has $V(\psi)=0.$ Thus, the potential function $\psi$ is constant along the fiber $\pi^{-1}(b)$, for every $b\in B$. Hence, we can define a function $\varphi$ on $B$ by $\varphi(b)=\psi(u)$, where $u\in\pi^{-1}(b)$, so $\psi=\varphi\circ\pi$. This function is smooth, since for any smooth local section $s: B\rightarrow M(f,\psi)$ with $s(b)\in\pi^{-1}(b)$, we have $\varphi(b)=\psi\circ s (b)$.
\end{proof}

Now, we are ready to prove the main theorems of this note.

\subsection{Proof of Theorem~\ref{characterization Ricci warped}}
\begin{proof}
As in the proof of Lemma~\ref{Pot-Func-GRSWB}, a gradient Ricci soliton warped flat bundle $M(f,\psi)$ is locally isometric to $B\times_fF$ and the potential function $\psi$ must be the lift $\tilde\varphi$ of a smooth function $\varphi$ on $B$ to $M$. Thus, the warped product $(B\times_fF,\tilde\varphi)$ is a gradient Ricci soliton as well. Hence, we can apply~\cite[Propositions~1 and 2]{FFG} to conclude that the functions $f$ and $\varphi$ satisfy the equations in \eqref{EqProp1.16} on $B$ for some constant $\lambda$, moreover, the Ricci tensor of $g_F$ is given by $Ric_F=\mu g_F$, for some constant $\mu$ satisfying \eqref{EqProp1.16 2}.

Reciprocally, suppose $(B,g_B)$ and $(F,g_F)$ are given as in the theorem, then we can construct a gradient Ricci soliton $M(f,\psi)$. For it, consider a homomorphism $h:\pi_1(B)\rightarrow Iso(F)$, and $\tilde B$ the $\pi_1(B)$-principal bundle over $B$, so that we have $M=(\tilde B\times F)/{\pi_1(B)}$. Now, take the Riemannian metric $g_f$ on $M$ as in~\eqref{WarpedMetric}, and observe that from $H^{\varphi}=\nabla^2\tilde{\varphi}$ and $H^f=\nabla^2\tilde{f}$ along horizontal vector fields, part~\eqref{P1 Lemma 2} of Lemma~\ref{conex} and the first equation of \eqref{EqProp1.16}, the Ricci soliton equation~\eqref{intro} is satisfied on the horizontal distribution. For mixed vector fields, when $X\in\mathcal{H}$ and $U\in\mathcal{V}$, we use $\bar\nabla\tilde{\varphi}\in\mathcal{H}$ and part~\eqref{P1 Lemma 1} of Lemma~\ref{conex} to obtain $\bar\nabla^2\tilde{\varphi}(X,U)=0$. So, by part~\eqref{P2 Lemma 2} of Lemma~\ref{conex}, the Ricci soliton equation is trivially satisfied. Now, for $U,V\in\mathcal{V}$, we have by definition of $\mu$ and part~\eqref{P3 Lemma 2} of Lemma~\ref{conex} that
\begin{equation*}
\overline{\ric}(U,V)=\left(\lambda-\frac{1}{f}\nabla\varphi(f)\right)g(U,V).
\end{equation*}
On the other hand, from part~\eqref{P4 Lemma 1} of Lemma~\ref{conex} we obtain
\begin{equation*}
\bar\nabla^2\tilde{\varphi}(U,V)=\frac{1}{f}\nabla\varphi(f)g(U,V).
\end{equation*}
Combining these two equations, we conclude that the Ricci soliton equation is satisfied.
\end{proof}

\subsection{Proof of Theorem~\ref{RecThmCRW}}

\begin{proof}
\noindent{\bf Item~(i):} Given an integrable principal bundle $G\to P\to B$ with a  horizontal distribution $\mathcal{H}$, and let $h:G\to Iso(g_F)$ be a homomorphism. Consider the associated fiber bundle $(P\times F)/G$ endowed with the flat connection determined by $\mathcal{H}$. Then, by Vilms~\cite[Thm.~3.5]{Vilms} one obtains a unique Riemannian metric $g$ on $(P\times F)/G$ for which the projection $\pi: (P\times F)/G \to B$ is a Riemannian submersion with totally geodesic fibers. Since $\mathcal{H}$ is integrable, Thms.~2.2 and~3.6 of Vilms~\cite{Vilms} imply that  $g$ is locally a product metric on vertical and horizontal neighborhoods. By considering the function $f>0$ on $B$, we can further warp $g$ to obtain the metric $g_f$ as in \eqref{WarpedMetric}. Taking $f$ and $\varphi$ to be smooth functions on $B$ satisfying the equations in~\eqref{EqProp1.16}, the result follows as in the proof of Theorem~\ref{characterization Ricci warped}.

\smallskip
\noindent{\bf Item~(ii):}
Let $\beta: \hat{B} \to B$ be a finite normal Riemannian covering map with covering transformation group $G$, and let $h:G\to Iso(g_F)$ be a homomorphism. Considering the hypothesis of the theorem, we can use our Theorem~\ref{characterization Ricci warped} for constructing a gradient Ricci soliton warped product $(\hat{B}\times_{\hat f} F,\hat{\varphi})$ with $G$-invariant parameters obtained from $g_{\hat B}=\beta^*g_{B}$, $\hat f=f\circ \beta$ and $\hat{\varphi}=\varphi\circ\beta$. Thus, we can pass to the quotient to obtain the Riemannian covering map $\hat\pi:\hat{B}\times_{\hat f} F\to (M,\bar{g}_{\hat f})$, where $M=(\hat{B}\times F)/G$ and $\bar{g}_{\hat f}$ is the quotient metric, so that $\hat{B}\times_{\hat f} F$ and $(M,\bar{g}_{\hat f})$ are locally isometric. Moreover, since $\hat{\varphi}$ is $G$-invariant, there exists a smooth function $\bar{\varphi}$ defined on $M$, such that $\bar\varphi\circ\hat\pi=\hat{\varphi}$. Hence, we have that $(M,\bar{g}_{\hat f},\bar\varphi)$ is a gradient Ricci soliton. It follows that $(M,\bar{g}_{\hat f})\to B$ has totally umbilical fibers, and, by construction, it has integrable horizontal distribution (see~\cite{Taubes}). 

\smallskip
\noindent{\bf Item~(iii):}
Since $G$ acts freely on $F$ and by isometries on $B$ we can consider a homomorphism $h:G\to Iso(g_{B})$ to get a flat bundle $M=(B\times F)/G$, where  the  action of $G$ on $B\times F$ is given by $a(b,c)=(h(a)b, ca)$, $a\in G$. Once the $f$ and $\varphi$ are invariant by isometries of $g_{B}$, the warped metric $\overline{g}$ is $G$-invariant (considering the action just defined). We can pass to the quotient to get the manifold $M$ equipped with the induced Riemannian metric $\hat{g}$ and potential function $\hat{\varphi}$, which is a gradient Ricci soliton. To see this, note that $(B\times_f F,\varphi)$ is a gradient Ricci soliton warped product, which is locally isometric to $(M,\hat g)$, and the result follows.
\end{proof}

\begin{remark}
\textnormal{In the setting of Theorem~\ref{characterization Ricci warped}, if $(B,g_B)$ has a nontrivial homotopy group, then $(\tilde{B}\times F)/\pi_1(B)$ admits a gradient Ricci soliton warped product structure with a nontrivial topology. This provides a first step toward understanding the construction of gradient Ricci solitons realized as general Riemannian submersions. A nontrivial example in the context of Riemannian submersions was explicitly constructed by Cao~\cite{Cao} in the special case of Kähler metrics.}
\end{remark}

\section{Acknowledgments}
The authors thank Ernani Ribeiro Jr and Ronaldo Freire de Lima for the critical reading of the manuscript and for the valuable suggestions that significantly improved the writing and the presentation of the results of this work. They are also deeply grateful to Huai-Dong Cao for bringing the work of Angenent and Knopf~\cite{AngenentKnopf} to our attention.

\section{\bf Declarations}

\subsection{\bf Conflict of interest} 
The authors declare no conflict of interest.



\subsection{\bf Author contributions}
All authors wrote the main manuscript text and reviewed the manuscript.

\subsection{\bf Funding}
José N.V. Gomes has been partially supported by the Conselho Nacional de Desenvolvimento Científico e Tecnológico (CNPq), Grant 310458/2021-8, and by the Fundação de Amparo à Pesquisa do Estado de São Paulo (FAPESP), Grants 2022/16097-2, 2023/11126-7 and 2024/00923-6. Marcus A.M. Marrocos was also partially supported by the latter agency, Grant 2020/14075-6.

\subsection{\bf Data Availability}
Data sharing was not applicable to this article as no datasets were generated or analyzed during the current study.

\end{document}